\theoremstyle{plain}
\newtheorem{thm}{Theorem}
\newtheorem{cor}{Corollary}
\newtheorem{lem}{Lemma}
\theoremstyle{definition}
\theoremstyle{remark}
\newtheorem{rem}{Remark}
\newcommand\C{\mathbb{C}}
\newcommand\Q{\mathbb{Q}}
\newcommand\pr{\mathbb{P}}
\newcommand\F{\mathbb{F}}
\newcommand\A{\mathbb{A}}
\newcommand\Z{\mathbb{Z}}
\DeclareMathOperator{\Hom}{Hom}
\DeclareMathOperator{\End}{End}
\DeclareMathOperator{\Spec}{Spec}
\newif\ifpersonalnote \personalnotetrue
\newcommand{\personalnoteON}{%
  \ifpersonalnote\personalnotetrue\fi}
\title[Polarization of the autonomous 4-dim
  Painlev\'e systems]{Uniqueness of polarization for the autonomous 4-dimensional
  Painlev\'e-type systems}
\author{Akane Nakamura, Eric Rains}
\date{}
\subjclass[2010]
{
    Primary: 34M55 Secondary: 33E17, 11J95.}
 \keywords{ \textit{ Painlev\'e-type equation, spectral curve, Painlev\'e divisor, abelian surface}}
\begin{document}

\begin{abstract}
  We prove that for any autonomous 4-dimensional integral system of
  Painlev\'e type, the Jacobian of the generic spectral curve has a unique
  polarization, and thus by Torelli's theorem cannot be isomorphic as an
  unpolarized abelian surface to any other Jacobian.  This enables us to
  identify the spectral curve and any irreducible genus two component of
  the boundary of an affine patch of the Liouville torus.
\end{abstract}

\maketitle


\section{Introduction}
One of the crucial aspects of the type of integrable system we consider is
that even though they are nonlinear, they possess a linear problem, usually
called the Lax pair.  Once we know the Lax pair, other nice features of the
integrable systems such as constants of motion, symmetry, and solutions in
terms of theta functions, are at our disposal.  Furthermore, one can use
qualitative properties of the Lax pair to classify such integrable
systems~\cite{2013arXiv1307.4033R}, see also
\cite{MR3087954,2016arXiv160803927K,MR3829183,MR3740334,kns}.  The question
we want to address in this paper is what if we do not know a linear problem
of an integrable system in advance.  Can we construct a Lax pair for the
given nonlinear integrable system?  The ways Lax pairs have been
constructed for finite-dimensional integrable systems seem to be more
haphazard than being systematic.  Our goal is to present a systematic way
to give a linear problem at least conceptually.  The key for doing this is
to compare curves appearing from the nonlinear side (Painlev\'e divisors)
to those appearing from the linear side (spectral curves).

There is a way to find the Laurent series solutions for weighted-homogeneous systems~\cite{MR1554772, MR737804}.
Using these Laurent series solutions, Adler and van
Moerbeke~\cite{MR999312} considered affine patches of the Liouville tori.
They call the corresponding divisor at infinity the ``Painlev\'e divisor''.
For the 4-dimensional integrable systems we consider, each component of the
Painlev\'e divisor is a curve parameterized by the constant of motions.
These are the curves we want to compare with the spectral curves.  The goal
of this paper is to show that for each of the 40 types of 4-dimensional
autonomous Painlev\'e-type system, any genus two component of the generic
Painlev\'e divisor is (geometrically) isomorphic to the corresponding
spectral curve.  Given the spectral curve (and in particular its generic
degeneration at infinity), it is straightforward to recover both the
integrable system and a corresponding linear problem.

Let us review an example from \cite{nakrims}.
The autonomous Garnier system of type $\frac{9}{2}$  is a Hamiltonian system with the following Hamiltonians~\cite{MR1042827,MR3740334}
\begin{align*}
H_1=H_{\mathrm{Gar}, s_1}^{\frac{9}{2}}
=&
p_1 q_2^2-p_1 s_1+p_2 s_2+p_1^4+3 p_2
   p_1^2+p_2^2-2 q_1 q_2,
   \\
    H_2=H_{\mathrm{Gar}, s_2}^{\frac{9}{2}}
   =&p_1^2 q_2^2-2 p_1 q_1 q_2+p_2 q_2^2+p_1^3
      s_2+p_1 s_2^2+p_2 p_1 s_2+p_2 s_1
      \\
      &-p_2p_1^3-2 p_2^2 p_1
      -q_2^2 s_2+q_1^2,
\end{align*}
where $s_1$, $s_2$ are constants.
As explained in \cite{nakrims}, the Hamiltonian system $H_{\mathrm{Gar}, s_1}^{\frac{9}{2}}$ has the following three parameter Laurent series solution,
\begin{align*}
q_1(t)
=&
-\frac{1}{t^5}+\frac{\alpha }{t^3}+\beta
   +\frac{1}{70} t \left(-35 \alpha ^3-18
   \alpha  s_2+10 s_1\right)-\frac{15}{2} t^2
   (\alpha  \beta )+\gamma 
   t^3\\
   &+O\left(t^4\right),
\\
p_1(t)
=&
\frac{1}{t^2}+\frac{\alpha }{2}+t^2
   \left(-\frac{3 \alpha ^2}{4}-\frac{3
   s_2}{5}\right)-4 \beta  t^3+t^4
   \left(-\frac{5 \alpha ^3}{4}-\frac{6 \alpha 
   s_2}{7}+\frac{s_1}{7}\right)
   \\
   &-3 t^5 (\alpha 
      \beta )+t^6 \left(-\frac{\alpha
   ^4}{8}+\frac{\gamma }{11}+\frac{3 \alpha ^2
   s_2}{385}+\frac{2 \alpha  s_1}{77}+\frac{18
   s_2^2}{275}\right)+O\left(t^7\right),
\\
q_2(t)
=&
-\frac{1}{t^3}+t \left(-\frac{3 \alpha
   ^2}{4}-\frac{3 s_2}{5}\right)-6 \beta 
   t^2+t^3 \left(-\frac{5 \alpha
   ^3}{2}-\frac{12 \alpha  s_2}{7}+\frac{2
   s_1}{7}\right)
   \\
   &-\frac{15}{2} t^4 (\alpha 
      \beta )+t^5 \left(-\frac{3 \alpha
   ^4}{8}+\frac{3 \gamma }{11}+\frac{9 \alpha
   ^2 s_2}{385}+\frac{6 \alpha 
   s_1}{77}+\frac{54
   s_2^2}{275}\right)+O\left(t^6\right),
\\	
p_2(t)
=&
-\frac{3 \alpha }{2 t^2}+\left(\frac{3 \alpha
   ^2}{2}+s_2\right)+6 \beta  t+t^2
   \left(\frac{9 \alpha ^3}{8}+\frac{9 \alpha 
   s_2}{10}\right)
   \\
   &+t^4 \left(\frac{15 \alpha
   ^4}{32}+\frac{9 \gamma }{22}-\frac{9 \alpha
   ^2 s_2}{308}-\frac{15 \alpha 
   s_1}{154}-\frac{27
   s_2^2}{110}\right)+O\left(t^5\right),
\end{align*}
where $\alpha, \beta, \gamma$ are constants.
If we restrict the Laurent series to the fiber over a point $(h_1,h_2)$, we obtain a component of the Painlev\'e divisor.
The equalities
 $H_i(q_1(t),p_1(t),q_2(t),p_2(t))=h_i, i=1,2$,
 are equivalent to the following
\begin{align*}
h_1=&
\frac{405 \alpha ^4}{32}+\frac{81 \gamma
   }{22}+\frac{648 \alpha ^2 s_2}{77}-\frac{150
   \alpha  s_1}{77}-\frac{23 s_2^2}{110},
   \\
h_2=&
\frac{729 \alpha ^5}{64}+\frac{243 \alpha 
   \gamma }{44}+81 \beta ^2+\frac{1539 \alpha
   ^3 s_2}{616}-\frac{207 \alpha ^2
   s_1}{308}-\frac{729 \alpha  s_2^2}{220}+s_1
   s_2.   
\end{align*}
By eliminating $\gamma$, the equation becomes
\begin{align}
\label{eq:Gar9/2pdivi}
  -\frac{243 \alpha ^5}{32}+81 \beta ^2+\frac{3
     \alpha  h_1}{2}-h_2-\frac{81 \alpha ^3
     s_2}{8}+s_1 \left(\frac{9 \alpha
     ^2}{4}+s_2\right)-3 \alpha  s_2^2=0. 
\end{align}
If we take $x=\frac{3}{2}\alpha, y=9\beta$, the equation becomes
\begin{align}
\label{eq:Gar9/2pdiv}
y^2=
x^5
+3 s_2 x^3
-s_1x^2
+(2 s_2^2-h_1) x
+h_2-s_1 s_2.
\end{align}

On the other hand, the Hamiltonian system $H_{\mathrm{Gar},s_i}^{\frac{9}{2}}$ has the  following Lax pair~\cite{MR3740334}.
\begin{align*}
A(x)\ =&A_0x^3+A_1x^2+A_2x+A_3,
\end{align*}
where
\begin{align*}
&A_0=
\begin{pmatrix}
0 & 1
\\
0 & 0
\end{pmatrix},
\
A_1=
\begin{pmatrix}
0 & p_1
\\
1 & 0
\end{pmatrix},
A_2=
\begin{pmatrix}
q_2 & p_1^2+p_2+2s_1
\\
-p_1 & -q_2
\end{pmatrix},
\\
&A_3=
\begin{pmatrix}
q_1-p_1q_2\ & \ p_1^3+2p_1p_2-q_2^2+s_1p_1-s_2
\\
-p_2+s_1 & -q_1+p_1q_2
\end{pmatrix}.
\end{align*}
	The spectral curve
	\begin{align*}
	\det\left(y I_2-A(x)\right)=0.
	\end{align*}
	 of the Garnier system of type $\frac{9}{2}$ is expressed as
\begin{align*}
y^2=x^5+3s_2x^3-s_1x^2+(2s_2^2-h_1)x+h_2-s_1s_2.
\end{align*}
Equation (\ref{eq:Gar9/2pdiv}) expressing the component of the Painlev\'e divisor is exactly  same as the   spectral curve of Garnier system of type $\frac{9}{2}$.
In particular, the generic degenerations of both of these genus 2 curves are of type ${\mathrm{VII}}^{*}$ in Namikawa-Ueno's notation~\cite{MR0369362}.

The conclusion we want to prove in this paper is the outcome of this example is not a coincidence.
That is, any genus 2 component of the Painlev\'e divisor is isomorphic to the corresponding spectral curve.

\subsubsection*{Acknowledgement}
\noindent 
The author would like to thank
Professors
Tadashi Ashikaga,
Brian Conrad,
Kazuki Hiroe,
Yusuke Nakamura,
Elena Mantovan,
and Haruo Yoshida for valuable discussions and advices.  The second author
was partially supported by a grant from the National Science Foundation,
DMS-1500806. The first author is grateful for Professors Toshio Oshima
(with JSPS grant-in-aid for scientific research B, No.25287017) and Yoko
Umeta (with Josai University Presedent's Fund) for supporting part of her
trips.  The first author is grateful for the hospitality of Caltech while
she stayed there.

\section{}
\subsection{Translation of the problem}
Our goal in this paper is to prove uniqueness of the isomorphism class of
genus 2 curves in the generic Liouville torus for the 4-dimensional
Painlev\'e-type systems.  We will prove this by using the fact that the
generic Liouville torus is the Jacobian of the corresponding spectral
curve.

The Jacobian of a smooth projective curve $C$ of genus $g$, 
$J(C)\coloneqq H^0(\omega_C)^{*}/H_1(C,\Z)$
comes with the canonical principal polarization $\Theta$ induced by the symplectic basis for $C$.
The classical Torelli's theorem states:
\begin{lem}[Torelli]
Two Jacobians $(J(C),\Theta)$ and $(J(C'),\Theta')$ of smooth curves $C$ and $C'$ are isomorphic as polarized abelian varieties if and only if $C$ and $C'$ are isomorphic.
\end{lem}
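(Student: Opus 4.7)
The forward direction is immediate from functoriality: an isomorphism $\varphi: C \to C'$ induces an isomorphism $\varphi_{*}: J(C) \to J(C')$, and this preserves the theta divisor since $\Theta$ is intrinsically determined by the Hodge structure on $H_1(C,\Z)$ together with its canonical symplectic pairing.

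For the converse, the strategy is to recover $C$ (up to isomorphism) from the polarized Jacobian $(J(C),\Theta)$. In the genus $2$ case that drives the present paper, this is especially clean: by the Abel--Jacobi theorem the map $C \to \mathrm{Pic}^1(C)$, $p \mapsto [p]$, identifies $C$ with $W_1$, and since $\Theta = W_{g-1} = W_1$ (up to translation in $J(C)$), the theta divisor is itself isomorphic to $C$. Because a principal polarization determines $\Theta$ up to translation, one recovers $C$ directly as the underlying curve of any representative of $\Theta$.

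For general $g \geq 2$, I would proceed via Riemann's singularity theorem, which asserts $\mathrm{mult}_L \Theta = h^0(L)$; in particular, the singular locus of $\Theta$ equals $W^1_{g-1}$. The Gauss map $\gamma: \Theta^{\mathrm{sm}} \to \pr^{g-1}$ sends a smooth point to the projectivized tangent hyperplane translated to $0 \in J(C)$. In the non-hyperelliptic case, the image of $\gamma$ is dual to the canonical image of $C$ in $\pr^{g-1}$, so $C$ can be reconstructed by taking the dual of the branch locus of $\gamma$ and combining this with the infinitesimal structure (projectivized tangent cones) at points of $W^1_{g-1}$. This is essentially Andreotti's argument.

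The main obstacle is the hyperelliptic case, where the canonical image is a rational normal curve and $\gamma$ has higher ramification, so the reconstruction above degenerates; one resolves this by a separate argument, e.g.\ by exhibiting the hyperelliptic involution intrinsically from the structure of $\Theta$ and recovering $C$ as the quotient by its $(-1)$-involution composed with an appropriate translation. For the application in this paper this subtlety does not arise: in genus $2$ every curve is hyperelliptic, but the direct identification $\Theta \cong C$ from the second paragraph bypasses the issue entirely.
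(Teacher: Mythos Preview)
The paper does not give a proof of this lemma at all: it is simply stated as ``the classical Torelli's theorem'' and used as a black box. So there is no proof in the paper to compare your proposal against.

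That said, your sketch is a reasonable outline of the standard approach. The forward direction and the genus~$2$ argument (identifying $\Theta$ with $W_1 \cong C$ via Abel--Jacobi) are correct and, as you note, already suffice for the applications in this paper. For general $g$ your outline follows Andreotti's proof via the Gauss map; one small correction is that it is the \emph{branch locus} of $\gamma$ (not the image, which is all of $\pr^{g-1}$) that is the dual variety of the canonical curve, and you do write this correctly later in the same sentence. Your treatment of the hyperelliptic case for $g\ge 3$ is vague---the actual argument requires more care to extract the Weierstrass points from the geometry of $\Theta$---but since the paper only needs genus~$2$, and you handle that case cleanly and directly, this gap does not affect the intended application.
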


In general, any genus 2 curve in an abelian surface induces a principal
polarization, and thus, it will suffice to show that the Jacobian of
the generic curve in each family has a unique principal polarization.
We in fact show something stronger: any polarization of the generic
Jacobian is a multiple of the standard principal polarization.

A polarization on an abelian variety $A$ is the first Chern class $H =c_1(L)$ of a positive definite line bundle $L$ (i.e., $H =c_1(L)$ is a positive definite Hermitian form) on $A$.
 Let $t_{a}\colon A\to A$ be a translation by $a\in A$.
 Then
  $ \phi_L\colon A\to \hat{A},\
   a\mapsto t_{a}^{*}L\otimes L^{-1}$
   is a homomorphism to the dual torus $\hat{A}\simeq
   \operatorname{Pic}^0(A)$.  Using this homomorphism, the following fact
   allows us to translate the uniqueness of polarization to the triviality
   of the symmetric part\footnote{An endomorphism is symmetric when it is
     invariant under the Rosati involution: $f\mapsto
     \phi_L^{-1}\hat{f}\phi_L$ for $L\in \operatorname{NS}(A)$, where
     $\hat{f}\colon \hat{A}\to\hat{A}$ is the dual homomorphism of $f$.}
   of the endomorphism ring of $A$.
   \begin{lem}
Let $L_0$ be a principal polarization of an abelian variety $A$.
 Then we have  an isomorphism of groups
$\varphi\colon \operatorname{NS}(A)\to\operatorname{End}^s(A),\
L\mapsto \phi_{L_0}^{-1}\phi_{L},$
where $\operatorname{NS}(A)$ is the N\'eron-Severi group of $A$ and $\operatorname{End}^s(A)$, the group of symmetric endomorphisms.
   \end{lem}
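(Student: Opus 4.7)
My plan is to verify in turn that $\varphi$ is well-defined, additive, lands in $\operatorname{End}^s(A)$, and is bijective. The first three properties are essentially formal consequences of standard properties of the assignment $L\mapsto\phi_L$, while surjectivity carries the real content of the statement.

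For well-definedness: since $L_0$ is a principal polarization, $\phi_{L_0}\colon A\to\hat A$ is an isomorphism, so $\phi_{L_0}^{-1}\phi_L$ is a genuine endomorphism. For $M\in\operatorname{Pic}^0(A)$ one has $t_a^*M\simeq M$ for every $a\in A$ and hence $\phi_M=0$, so $\varphi$ descends to $\operatorname{NS}(A)$; additivity then follows directly from the definition $\phi_L(a)=t_a^*L\otimes L^{-1}$ (or, equivalently, from the theorem of the square). To show that the image lies in $\operatorname{End}^s(A)$, I would use the identity $\widehat{\phi_L}=\phi_L$ under the double-duality identification $\widehat{\widehat A}\simeq A$ to compute the Rosati adjoint of $f=\phi_{L_0}^{-1}\phi_L$ as $\phi_{L_0}^{-1}\widehat f\,\phi_{L_0}=\phi_{L_0}^{-1}\phi_L\phi_{L_0}^{-1}\phi_{L_0}=f$. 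Injectivity is then immediate: $\varphi(L)=0$ forces $\phi_L=0$, i.e.\ $L\in\operatorname{Pic}^0(A)$, so $L$ is trivial in $\operatorname{NS}(A)$.

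The main step is surjectivity. Given $f\in\operatorname{End}^s(A)$, I would set $\psi=\phi_{L_0}\circ f\colon A\to\hat A$; the Rosati symmetry $f^\dagger=f$ translates, via the same computation $\widehat{\phi_{L_0}}=\phi_{L_0}$, into the condition $\widehat\psi=\psi$ under the double-duality identification. What remains is the classical fact that every symmetric homomorphism $\psi\colon A\to\hat A$ is of the form $\phi_L$ for some $L\in\operatorname{Pic}(A)$ (see e.g.\ Mumford, \emph{Abelian Varieties}, \S\,20). Concretely, $L$ can be produced by pulling back the Poincar\'e bundle on $A\times\hat A$ along $(\mathrm{id}_A,\psi)$, and the symmetry of $\psi$ is exactly what allows one to exhibit the resulting class on $A\times A$ as coming from a single line bundle on $A$ (well-defined modulo $\operatorname{Pic}^0(A)$). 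The main obstacle is therefore not a calculation but the invocation of this nontrivial surjectivity result, which I would cite rather than reprove.
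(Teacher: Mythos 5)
Your argument is correct. Note, however, that the paper itself offers no proof of this lemma at all --- it is quoted as a standard fact (it is, e.g., Proposition 5.2.1 of Birkenhake--Lange, \emph{Complex Abelian Varieties}, or the content of \S\S 20--21 of Mumford's \emph{Abelian Varieties}), so there is no argument in the paper to compare yours against. Your reduction of everything to the single classical input --- that a homomorphism $\psi\colon A\to\hat A$ is of the form $\phi_L$ if and only if it is symmetric --- is exactly the standard algebraic proof, and your formal verifications (well-definedness via $\phi_M=0$ for $M\in\operatorname{Pic}^0(A)$, additivity via the theorem of the square, Rosati-symmetry of the image via $\widehat{\phi_L}=\phi_L$, injectivity via $\ker(L\mapsto\phi_L)=\operatorname{Pic}^0(A)$) are all accurate. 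Two small remarks: first, in the complex-analytic setting the paper works in, surjectivity admits a more direct proof that avoids the Poincar\'e-bundle descent: given a Rosati-symmetric $f$, the form $H(v,w)=H_0(f(v),w)$ (where $H_0$ is the Hermitian form of $L_0$) is Hermitian precisely because of the symmetry of $f$, has integral imaginary part on the lattice, and hence lies in $\operatorname{NS}(A)$ with $\phi_{L_0}^{-1}\phi_H=f$. Second, if you do go via $(\mathrm{id}_A,\psi)^*\mathcal{P}$, be aware that $\phi_{(\mathrm{id}_A,\psi)^*\mathcal{P}}=\psi+\hat\psi=2\psi$ rather than $\psi$, so the ``single line bundle'' you allude to requires an additional (standard but not completely formal) step to remove the factor of $2$; citing the result outright, as you propose, sidesteps this cleanly.
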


\subsection{The statements of the results}
Let us state the key statement we are going to prove in this paper.
\begin{thm}\label{thm:key}
For the 4-dimensional autonomous Painlev\'e-type equations, the Jacobian of
the generic spectral curve has no nontrivial endomorphism.
\end{thm}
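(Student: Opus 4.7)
The plan is to prove Theorem~\ref{thm:key} by a case-by-case analysis over the 40 types. For each type the spectral curve $C$ is a genus $2$ curve in a family parametrized by the coupling constants $s$ and the Hamiltonian values $h$. Since endomorphisms of an abelian scheme specialize injectively, it will suffice to exhibit, for each type, even a single rational specialization whose Jacobian has endomorphism ring $\Z$; the endomorphism ring of the generic fiber then injects into this one and hence is also $\Z$.

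The main tool I would use is the symplectic monodromy of the family of Jacobians. By the Picard--Lefschetz formula, the local monodromy around a nodal degeneration of $C$ acts on $H_1(C,\Z)$ as a transvection $x \mapsto x + \langle x,\delta\rangle\,\delta$ along the vanishing cycle $\delta$. Any endomorphism of $J(C_\eta)$ defined over the function field of the base must commute with the entire monodromy representation. Consequently, if I can exhibit vanishing cycles $\delta_1,\ldots,\delta_k$ whose $\Q$-span is all of $H_1(C_\eta,\Q)$, then the commutant inside $\End(H_1)$ collapses to scalars, forcing $\End(J(C_\eta)) = \Z$. In fact, four transvections in sufficiently general position already generate a Zariski-dense subgroup of $\operatorname{Sp}_4(\Z)$, which is more than enough.

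To execute this for each of the 40 types, I would first use the explicit spectral-curve data---available in prior work such as~\cite{2013arXiv1307.4033R,MR3740334,kns}---to locate, in parameter space, discriminant components along which $C$ acquires a single node, and extract the vanishing cycle on each. The Namikawa--Ueno classification of genus $2$ degenerations, already invoked in the introduction for type $\frac{9}{2}$, gives combinatorial control on the degeneration at infinity in the moduli parameter and typically produces several nodes coalescing, and hence several further transvections. In each type I would then check that the resulting vanishing cycles indeed $\Q$-span $H_1$.

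The main obstacle is that the 40 types differ structurally in the spectral-curve degree, in the discriminant geometry, and in the degeneration behavior at infinity, so the argument cannot be made fully uniform and a residue of type-by-type bookkeeping is unavoidable. For any type where the transvection argument is not transparent, my fallback would be to choose a convenient rational specialization of the parameters and verify directly that the resulting Jacobian has trivial endomorphism ring, for instance by reducing modulo a small prime and checking that the Frobenius characteristic polynomial is irreducible and non-CM, which via Honda--Tate and specialization pins $\End(J(C_\eta)) = \Z$ down on the nose.
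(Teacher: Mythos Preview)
Your monodromy/Picard--Lefschetz approach is a legitimate alternative in principle: if the vanishing cycles really do $\Q$-span $H_1$, the commutant collapses and $\End(J(C_\eta))\otimes\Q=\Q$. But as written this is a plan, not a proof---you would still have to locate the nodal loci and verify the spanning condition type by type, and you give no evidence that this has actually been done for any of the 40 families.

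The paper's argument is organized quite differently and buys a large reduction in casework. Rather than treating all 40 types, it uses the known degeneration hierarchy among the Painlev\'e-type systems: every type degenerates to one of six ``most degenerate'' cases, and by the same specialization injectivity you invoke (Lemma~\ref{lem:end2}), triviality of $\End$ for those six propagates upward to all 40. For four of the six the family of spectral curves is shown to \emph{dominate} $\mathcal{M}_2$ (by checking algebraic independence of the absolute Igusa invariants via the Jacobian criterion), so the generic fiber is a generic genus~2 curve and has $\End=\Z$. For the remaining two ($H^{4/3+4/3}_{\mathrm{KFS}}$ and $H^{3/2+5/4}_{\mathrm{KSs}}$), where the family does not dominate $\mathcal{M}_2$, the paper reduces a fixed rational fiber modulo \emph{two} different primes and shows the resulting endomorphism algebras have no common subfield other than $\Q$.

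This last point is where your fallback contains a genuine gap. An abelian variety over a finite field is \emph{always} CM: its endomorphism algebra contains $\Q(\pi)$ with $\pi$ the Frobenius, and $\pi\notin\Q$ since $|\pi|=\sqrt{p}$. So ``irreducible and non-CM'' is not a condition any reduction can satisfy, and no single prime can ever yield $\End=\Z$. Irreducibility of the characteristic polynomial of Frobenius only tells you $\End_{\F_p}(J)\otimes\Q$ is a quartic number field; $\End_\Q(J)$ injects into it but could still be an order in its (unique) quadratic subfield. One needs at least two primes whose quartic fields have non-isomorphic quadratic subfields---exactly the argument the paper carries out.
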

Using the isomorphism between $\operatorname{NS}(A)$ and the group of symmetric endomorphisms, we have
\begin{cor}
For the 4-dimensional autonomous Painlev\'e-type equations, the Jacobian of
the generic spectral curve has a unique principal polarization.
\end{cor}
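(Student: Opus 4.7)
The plan is to deduce the corollary from Theorem \ref{thm:key} by a purely formal argument built on the lemma that identifies $\operatorname{NS}(A)$ with the group of symmetric endomorphisms. Write $A$ for the generic Jacobian in question and $L_0$ for its canonical principal polarization.

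First I would combine Theorem \ref{thm:key}, which gives $\operatorname{End}(A) = \Z$, with the fact that the Rosati involution is an anti-involution that acts as the identity on $\Z \subset \operatorname{End}(A)$; this yields $\operatorname{End}^s(A) = \Z$. Applying the previous lemma with $L_0$ as the reference polarization, I then obtain an isomorphism $\varphi\colon \operatorname{NS}(A) \to \Z$. Moreover $\varphi([L_0]) = \phi_{L_0}^{-1}\phi_{L_0} = \operatorname{id}_A$ corresponds to the generator $1 \in \Z$, so $\operatorname{NS}(A) = \Z\cdot[L_0]$; this already establishes the stronger statement flagged in the introduction, namely that every line-bundle class on $A$ is an integer multiple of $[L_0]$.

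With this in hand, any polarization $L$ on $A$ satisfies $[L] = n[L_0]$ for some $n \in \Z$. Positivity of the Hermitian form associated with $L$ forces $n > 0$, so every polarization is a \emph{positive} integer multiple of $L_0$. For $L$ to be principal one additionally requires $\chi(L) = 1$; on an abelian surface $\chi(nL_0) = n^2 \chi(L_0) = n^2$, so $n = 1$, and therefore $[L] = [L_0]$ in $\operatorname{NS}(A)$. Uniqueness of the principal polarization (up to algebraic equivalence, which is what is meant here) follows.

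The actual work, and indeed the main obstacle, is Theorem \ref{thm:key} itself, whose proof must descend to a case-by-case analysis of the 40 spectral families of generic curves; the deduction recorded above is purely formal. The only subtlety worth checking by hand is the identification $\varphi([L_0]) = \operatorname{id}_A$, which is immediate from the definition $\varphi(L) = \phi_{L_0}^{-1}\phi_L$ specialized to $L = L_0$, together with the Riemann--Roch computation $\chi(nL_0) = n^g \chi(L_0)$ on an abelian variety of dimension $g = 2$.
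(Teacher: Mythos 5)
Your proposal is correct and follows exactly the route the paper intends: the paper's entire proof of this corollary is the one-line appeal to the isomorphism $\operatorname{NS}(A)\cong\operatorname{End}^s(A)$, and you have simply filled in the routine details (Rosati fixes $\Z$, positivity forces $n>0$, and $\chi(nL_0)=n^2$ forces $n=1$). No discrepancy with the paper's argument.
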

Applying Torelli's theorem, we obtain the following.
\begin{thm}
For the 4-dimensional autonomous Painlev\'e-type equations, any genus 2
curve in the Jacobian of the generic spectral curve is isomorphic to the
spectral curve.
\end{thm}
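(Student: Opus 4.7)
The plan is to deduce the theorem from the Corollary (unique principal polarization on the generic Jacobian) together with the classical Torelli theorem recorded as Lemma~1. Let $C$ be the generic spectral curve, write $A = J(C)$ with its canonical principal polarization $\Theta_C$, and let $C' \subset A$ be any smooth genus~$2$ curve.

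The first step is the standard observation that $C'$ induces a principal polarization on $A$: since $K_A = 0$, adjunction gives $(C')^2 = 2g(C') - 2 = 2$, so the line bundle $\mathcal{O}_A(C')$ defines a polarization of degree $1$ on $A$ whose class in $\operatorname{NS}(A)$ is $[C']$. By the Corollary, $\Theta_C$ is the only principal polarization of $A$, and therefore $[C'] = \Theta_C$ in $\operatorname{NS}(A)$.

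The second step is to identify $(A, [C'])$, as a principally polarized abelian surface, with the Jacobian of $C'$. This is the classical correspondence between smooth genus~$2$ curves embedded in abelian surfaces and principally polarized abelian surfaces: after translating $C'$ to contain $0$, the universal property of the Jacobian produces $\alpha \colon J(C') \to A$ with $\iota = \alpha \circ \iota'$, where $\iota \colon C' \hookrightarrow A$ is the inclusion and $\iota'\colon C' \hookrightarrow J(C')$ the Abel-Jacobi embedding. Since $C'$ has genus $2$ it cannot lie in any elliptic subvariety of $A$, so the image of $\alpha$ is not contained in a proper abelian subvariety and $\alpha$ is a surjective isogeny between $2$-dimensional abelian varieties. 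In genus~$2$ the theta divisor of $J(C')$ is precisely the Abel-Jacobi image of $C'$, and comparing with $[C']$ on $A$ forces $\deg\alpha = 1$, so $\alpha$ is an isomorphism of principally polarized abelian varieties $(J(C'), \Theta_{J(C')}) \cong (A, [C'])$.

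Combining the two steps yields $(J(C'), \Theta_{J(C')}) \cong (J(C), \Theta_C)$, and Torelli's theorem immediately delivers $C \cong C'$. Essentially all of the substance has already been packaged into Theorem~\ref{thm:key} and its Corollary; the present statement is a formal consequence resting on two standard facts (adjunction on abelian surfaces and the genus~$2$ Abel-Jacobi identification of $J(C')$ with $A$). The real hurdle was overcoming the rank condition on $\operatorname{End}(A)$ in Theorem~\ref{thm:key}; what remains here is bookkeeping with polarizations, and the only point requiring genuine care is verifying that $\alpha$ identifies the two principal polarizations --- which is exactly what enforces the degree-$1$ conclusion above.
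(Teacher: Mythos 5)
Your proposal is correct and follows essentially the same route as the paper: a genus 2 curve in the abelian surface induces a principal polarization by adjunction, uniqueness of the principal polarization (the Corollary) identifies it with $\Theta_C$, the standard genus 2 identification gives $(J(C'),\Theta')\cong(A,[C'])$, and Torelli's theorem concludes. You have merely spelled out the standard steps that the paper leaves implicit in the phrases ``any genus 2 curve in an abelian surface induces a principal polarization'' and ``Applying Torelli's theorem.''
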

We can then apply this to Painlev\'e divisors to obtain our desired result.
\begin{cor}
For the 4-dimensional autonomous Painlev\'e-type equations, any genus two
component of the generic Painlev\'e divisor is isomorphic to the
corresponding spectral curve.  In particular, the generic
degeneration\footnote{The ``generic degeneration'' of a family of curves
  over $\pr^n$ is the special fiber of the minimal proper regular model of
  the restriction to a generic line in the base.} of the spectral curve and
the generic degeneration of any irreducible component of the Painlev\'e
divisor are the same.
\end{cor}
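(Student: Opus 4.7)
The plan is to deduce this corollary almost directly from the immediately preceding theorem, using the identification --- stated at the beginning of this section --- of the generic Liouville torus with the Jacobian of the corresponding spectral curve.

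For the first assertion, I would argue as follows. For the algebraically completely integrable systems of Painlev\'e type under consideration, the generic Liouville torus is an abelian surface isomorphic to the Jacobian of the corresponding spectral curve, and the Painlev\'e divisor, being the divisor at infinity of an affine Laurent-series patch inside the Liouville torus, naturally sits on this abelian surface. Hence any irreducible genus $2$ component of the generic Painlev\'e divisor is realized as a genus $2$ curve inside the Jacobian of the generic spectral curve. Applying the preceding theorem then immediately gives that such a component is isomorphic to the spectral curve.

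For the ``in particular'' clause on generic degenerations, I would restrict both families --- the family of spectral curves and the family of genus $2$ Painlev\'e divisor components --- to a generic line $\ell$ in the parameter space of Hamiltonian values. This yields two families of smooth proper genus $2$ curves over a Zariski open subset of $\ell$, and by the first assertion their generic fibers are isomorphic over the function field $K$ of $\ell$. Localizing at a chosen degeneration point of $\ell$ yields a DVR $R\subset K$; since the minimal proper regular model over $\Spec R$ of a smooth proper curve of genus $\geq 2$ is uniquely determined by its generic fiber, the minimal proper regular models of the two families over $\Spec R$ coincide, and hence their special fibers --- which are, by definition, the generic degenerations --- are isomorphic.

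The main conceptual content has effectively been absorbed into the preceding theorem, so what is left here is rather formal. If any obstacle remains, it is in verifying, for each of the $40$ families, that the genus $2$ components of the Painlev\'e divisor really embed as curves in the Jacobian (rather than requiring a blow-up), so that we are applying the preceding theorem to a genuine curve on the abelian surface in question. This is accessible from the explicit Adler--van Moerbeke Laurent-series parameterization, and once it is confirmed, the chain Theorem \ref{thm:key} $\Rightarrow$ unique polarization $\Rightarrow$ Torelli $\Rightarrow$ the present corollary goes through uniformly.
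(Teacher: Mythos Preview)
Your argument is correct and follows essentially the same route as the paper, which simply says ``we can then apply this to Painlev\'e divisors'' and states the corollary. You have filled in more detail than the paper does, particularly for the ``in particular'' clause on generic degenerations (the minimal proper regular model argument), which the paper leaves entirely implicit; this is the natural way to make that step precise and is a welcome elaboration rather than a different approach.
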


\begin{rem}
For each of the 40 types of autonomous 4-dimensional Painlev\'e-type
equation, the generic degeneration of the spectral curve is
known~\cite{nakdoctor}.  Therefore, by computing the generic degeneration
of a genus 2 component of the Painlev\'e divisors for one of these
equations, we can identify the type.  (In particular, the 40 types of
equations, originally classified via the Lax pair, are indeed distinct as
integrable systems.)
\end{rem}

\begin{rem}
For general curves and their Jacobians, similar results  may not always hold.
For instance, Howe~\cite{MR2156657} gave families of pairs of non-isomorphic
curves whose Jacobians are isomorphic to one another as unpolarized abelian
varieties.  Note that Torelli's Theorem still applies, and thus any such
example admits multiple polarizations, so has nontrivial endomorphism
ring.  In this context, it is worth noting that although the approach below
using Igusa invariants will be difficult to extend to higher genus cases,
the semicontinuity-based approach should work more generally.
\end{rem}

\subsection{Proof of Theorem~\ref{thm:key}}
We first prove the triviality of the endomorphism rings for the most
degenerate cases ($H^{\frac{9}{2}}_{\rm {Gar}}$, $H^{\frac{5}{2}+\frac{3}{2}}_{\rm
  {Gar}}$, $H^{\frac{4}{3}+\frac{4}{3}}_{\rm{KFS}}$,
$H^{\frac{3}{2}+\frac{5}{4}}_{\rm{KSs}}$. $H^{\rm{III}(D_8)}_{\rm {Mat}}$,
$H^{\rm{I}}_{\rm {Mat}}$), since we know that any other case degenerates to
one of these 6 cases\cite{sakai_imd, kns, 2016arXiv160803927K, MR3740334,
  MR3829183}.  That this suffices follows from Lemma \ref{lem:end2} below.

\subsubsection*{Proof for $H^{\frac{9}{2}}_{\mathrm {Gar}}$,  $H^{\frac{5}{2}+\frac{3}{2}}_{\mathrm {Gar}}$,  $H^{\mathrm{III}(D_8)}_{\mathrm {Mat}}$, $H^{\mathrm{I}}_{\mathrm {Mat}}$}
We use the fact that the Jacobian of a generic hyperelliptic curve has
trivial endomorphism ring.

We will show that the family of spectral curves of a system of type
$H^{\frac{9}{2}}_{\mathrm {Gar}}$, $H^{\frac{5}{2}+\frac{3}{2}}_{\mathrm
  {Gar}}$, $H^{\mathrm{III}(D_8)}_{\mathrm {Mat}}$,
$H^{\mathrm{I}}_{\mathrm {Mat}}$ dominates the moduli space of genus two
curves, so that a typical curve in our family has no non-trivial
endomorphisms\footnote{This is not the case for
  $H_{\mathrm{KFS}}^{\frac{4}{3}+\frac{4}{3}}$ and
  $H_{\mathrm{KSs}}^{\frac{3}{2}+\frac{5}{4}}$, so we will apply a
  different proof for these two cases.}.  The moduli space of genus two
curves $\mathcal{M}_2$ can be identified with
$\operatorname{Proj}\C[J_2,J_4,J_6,J_{10}]\setminus\{J_{10}=0\}$, where
$J_{2i}$'s are the Igusa invariants and $J_{10}$ is the discriminant.
We can take the absolute invariants 
\begin{align*}
I_1=\frac{J_4}{J_2^2},\
I_2=\frac{J_6}{J_2^3},\
I_3=\frac{J_{10}}{J_2^5}
\end{align*}
as coordinates for the affine subset $\mathcal{M}_2\setminus\left\{J_2\neq
0\right\}$.  For the 4 cases ($H^{\frac{9}{2}}_{\mathrm {Gar}}$,
$H^{\frac{5}{2}+\frac{3}{2}}_{\mathrm {Gar}}$,
$H^{\mathrm{III}(D_8)}_{\mathrm {Mat}}$, $H^{\mathrm{I}}_{\rm {Mat}}$), we
can check, using the Jacobian criterion, that the Igusa invariants of their
spectral curves are algebraically independent (see Appendix), so that each
space of spectral curves dominates $\mathcal{M}_2$.  Therefore, the
Jacobian of the generic spectral curve of these 4 cases has trivial
endomorphism ring.

\subsubsection*{
Proof for $H^{\frac{4}{3}+\frac{4}{3}}_{\rm{KFS}}$,
$H^{\frac{3}{2}+\frac{5}{4}}_{\rm{KSs}}$} In these cases, the absolute
Igusa invariants $I_1, I_2, I_3$ are no longer algebraically independent,
so the previous proof cannot be applied.
Instead, we will use semicontinuity results  to reduce the problem to
curves over finite fields, where the endomorphism ring is no longer
trivial, but is often easy to compute.


The following well-known result implies that given a family $A/S$ of
abelian varieties over a reasonable scheme $S$, the endomorphism ring of
the generic fiber is the same as the endomorphism ring of the family as a
whole.

\begin{lem}
\label{lem:end1}
 Let $S$ be an integral normal Noetherian scheme, let $X$ be a smooth 
$S$-scheme, and let $A$ be an abelian scheme over $S$.
  Then any morphism 
$X_{K(S)}\to A_{K(S)}$ 
extends to a unique morphism $X\to A$.                                               
In particular, the natural map 
$\End_S(A)\to \End_{K(S)}(A_{K(S)})$
 is an isomorphism\footnote{$K(S)=\mathcal{O}_{S,\xi}$, where $\xi$ is the generic point of  an irreducible scheme $S$, denotes the function field of $S$.}. 
\end{lem}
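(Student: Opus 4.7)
The plan is to prove this classical extension result in three stages: uniqueness via a separatedness argument, existence by reducing to the DVR case and invoking the Weil extension theorem, and finally the endomorphism statement by applying these to $X=A$ and $X=A\times_S A$. For uniqueness, I would observe that if $f,g\colon X\to A$ are two extensions of $f_K$, then their equalizer $E\subset X$ is closed because $A\to S$ is separated; smoothness of $X/S$ gives flatness, so the integrality of $S$ forces $X_{K(S)}$ to be schematically dense in $X$, and hence $E=X$.

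For existence, I would first handle the case when $S=\Spec R$ for a discrete valuation ring $R$: then $A$ is smooth and proper over $S$, hence is the N\'eron model of its own generic fiber, and the N\'eron mapping property extends $f_K$ to a morphism $X\to A$. In the general case, I would apply this step at each codimension-one point $\eta\in S$, where $\mathcal{O}_{S,\eta}$ is a DVR by normality, and use uniqueness to glue the resulting local extensions into a single morphism defined on an open $U\subseteq X$ whose complement $Z$ has codimension at least two in $X$. To extend across $Z$, I would form the schematic closure $\Gamma\subset X\times_S A$ of the graph on $U$; it is proper over $X$ (since $A/S$ is proper) and an isomorphism over $U$, and any failure of $\Gamma\to X$ to be an isomorphism at a geometric point would, in some fiber $A_s$, produce a non-constant rational image of a projective variety with nonempty indeterminacy, forcing a rational curve into the abelian variety $A_s$ — impossible. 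This is the essence of Weil's extension theorem.

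For the endomorphism isomorphism, injectivity of $\End_S(A)\to\End_{K(S)}(A_{K(S)})$ is immediate from uniqueness. For surjectivity, any endomorphism $\varphi_K$ of the generic fiber extends by the first part (with $X=A$) to a morphism $\varphi\colon A\to A$; uniqueness applied separately to morphisms $S\to A$ and $A\times_S A\to A$ then forces $\varphi\circ 0_S=0_S$ and $\varphi\circ m_A=m_A\circ(\varphi\times\varphi)$, so $\varphi$ is an endomorphism of abelian schemes.

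The main obstacle is the step extending across the codimension-$\geq 2$ locus in the existence argument; this is the only place requiring nontrivial geometry (the absence of rational curves in abelian varieties), while everything else reduces to standard scheme-theoretic gluing together with the N\'eron mapping property for smooth proper $A/R$ over a DVR.
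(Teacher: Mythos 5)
Your argument follows essentially the same route as the paper's proof: extend the rational map $X\dashrightarrow A$ over all codimension-one points of $X$ using properness of $A/S$ over the relevant discrete valuation rings, invoke Weil's extension theorem to cross the remaining codimension-$\geq 2$ locus, and then verify that the extension of an endomorphism is again a homomorphism. The only differences are cosmetic — the paper uses the valuative criterion directly and the rigidity of the identity section where you use the N\'eron mapping property and uniqueness on $A\times_S A$ — and your graph-closure/rational-curves sketch of Weil's theorem is better replaced by a citation (as the paper does), since making it rigorous over a general base is delicate.
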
                              
                                              
\begin{proof}
 Any morphism of 
$X_{K(S)}\to A_{K(S)}$ determines a rational map $\phi:X\dashrightarrow A$.
 $X$ is normal, since it is smooth over $S$, and $A\to S$ is proper over
 $S$ by definition.\footnote{Abelian schemes are proper and smooth
   $S$-group schemes with connected fibers} Therefore, by the valuative
 criterion for properness, $\phi$ extends uniquely to the (regular) local
 ring over any codimension 1 point.  It follows by Weil's extension theorem
 that $\phi$ is defined 
everywhere, so gives a morphism as required.

In particular, an endomorphism of $A_{K(S)}$ extends as a morphism of
schemes from $A$ to itself.  A morphism of abelian schemes is a
homomorphism if and only if it fixes the identity, and this follows for
$\phi$ since it fixes the generic point of the identity section.

The isomorphism claim follows from the fact that $\phi\in \End(A)$ is
trivial on the generic fiber if and only if the image of the generic fiber is the
identity, which implies that the image of $\phi$ itself is the identity, so
that $\phi$ is trivial.
\end{proof}                 
\begin{rem}
This is just Prop.1.2.8 of \cite{MR1045822} when $S$ is Dedekind, saying
$X$ is a N\'eron model of its generic fiber $X_{K(S)}$.  The proof is
essentially the same, but we should replace ``closed point of $S$" in the
Dedekind case by ``codimension 1 point of $S$" when $S$ is not necessarily
of dimension 0 or 1.  The result is widely known, but we have added a proof
since we could not find a reference.
\end{rem}    

In order to study the endomorphism ring over $S$, we can use various
specializations due to the following well-known injectivity result for
endomorphism rings of Abelian schemes.
\begin{lem}\label{lem:end2}
 Let S be a connected normal scheme, and let $A/S$, $B/S$ be abelian 
schemes over $S$.
  Then for any point $s\in S$, the natural map $\Hom_S(A,B)\to 
\Hom_{k(s)}(A_s,B_s)$ is injective\footnote{$k(s)=\mathcal{O}_S,s/\mathfrak{m}_s$ denotes the residue field.}.
\end{lem}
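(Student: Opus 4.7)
The plan is to reduce to showing the following: if $h \in \Hom_S(A,B)$ satisfies $h_s = 0$, then $h = 0$. (Apply this to $h = f - g$, which makes sense since $\Hom_S(A,B)$ is a group under the group law on $B$.) I would introduce the locus
\[
Z \;=\; \{\, t \in S : h_t = 0 \,\} \;\subseteq\; S,
\]
which contains $s$ by hypothesis, and show that $Z$ is both open and closed in $S$; connectedness of $S$ then forces $Z = S$, and a brief reducedness argument upgrades fiberwise vanishing to vanishing of $h$ as a morphism.

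For closedness, let $C = h^{-1}(e_B(S))$, a closed subscheme of $A$, and let $\pi : A \to S$ denote the structure map. A point $t \in S$ fails to lie in $Z$ exactly when some point of the fiber $A_t$ lies in $A \setminus C$, i.e., when $t \in \pi(A \setminus C)$. Since $A \to S$ is smooth, $\pi$ is universally open, so $\pi(A \setminus C)$ is open in $S$ and $Z$ is its closed complement.

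For openness I would invoke the rigidity lemma for families of proper flat schemes with geometrically connected fibers: if $X \to S$ is such a family, $Y \to S$ is separated, and an $S$-morphism $f : X \to Y$ satisfies $f(X_s) = \{y_s\}$ for a single point, then there is an open neighborhood $V \ni s$ on which $f|_{X_V}$ factors through a section of $Y \to S$. Applied to $h : A \to B$ at $s$, where $h(A_s) = \{e_B(s)\}$, this produces a section $\sigma : V \to B$ through which $h|_{A_V}$ factors; the fact that $h$ is a homomorphism (so sends $e_A$ to $e_B$) forces $\sigma = e_B|_V$, whence $h|_{A_V} = 0$ and $V \subseteq Z$.

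Combining, $Z = S$, so the closed subscheme $C \subseteq A$ agrees with $A$ on the underlying topological space. Since $S$ is normal and therefore reduced and $A \to S$ is smooth, the total space $A$ is itself reduced; a closed subscheme of a reduced scheme that shares its underlying topological space must equal it, so $C = A$ scheme-theoretically and $h = 0$. The one ingredient requiring genuine input is the rigidity lemma; granted that, the rest is the open-and-closed sweep above plus the observation that reducedness of $A$ lets one recover the morphism from its collection of fibers.
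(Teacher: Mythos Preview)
The paper does not prove this lemma; it simply records it as a ``well-known injectivity result'' and moves on. Your argument is correct and is essentially the standard one (cf.\ Mumford's \emph{GIT}, Proposition~6.1 and Corollary~6.2): rigidity gives openness of the vanishing locus $Z$, the open-image argument gives closedness, connectedness of $S$ forces $Z=S$, and reducedness of $A$ upgrades fiberwise vanishing to $h=0$. One small remark: you invoke normality of $S$ only to obtain reducedness of $S$ (hence of $A$, via smoothness), so your proof in fact goes through for any connected reduced base; this is the generality in which the result is usually stated.
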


Thus, to show that $\End_{K(S)}(A_{K(S)})$ is trivial, or equivalently that
$\End_S(A_S)$ is trivial, it suffices to show that $\End_s(A_s)$ is trivial
for some point $s\in S$.  In our cases, the family $S$ is defined over a
number field ($\Q$, in fact), and we may take $s$ to be a point over that
field, so that we reduce to showing that a particular abelian scheme over a
number field has trivial endomorphism ring.  If we try to specialize any
further, we encounter the difficulty that an abelian scheme over a finite
field always has complex multiplication, so nontrivial endomorphism ring.
However, we can hope to find {\em two} specializations such that their
endomorphism rings do not have any isomorphic subrings other than $\Z$.

In our case, the natural family has $S=\A^3_{\Q}$ (or, more precisely, the
complement of the discriminant locus in the affine space).  If the
endomorphism ring of the family is indeed trivial, then we expect that {\em
  most} fibers over $\Q$ will have trivial endomorphism ring, so we simply
choose such points until we find one that works.  We then need only find a
pair of primes $p\in \Spec\Z$ for which we can compute the endomorphism
rings and verify the lack of ``intersection''.


%

\begin{figure}[h]
\includegraphics[width=12cm, natwidth=2535, natheight=828]{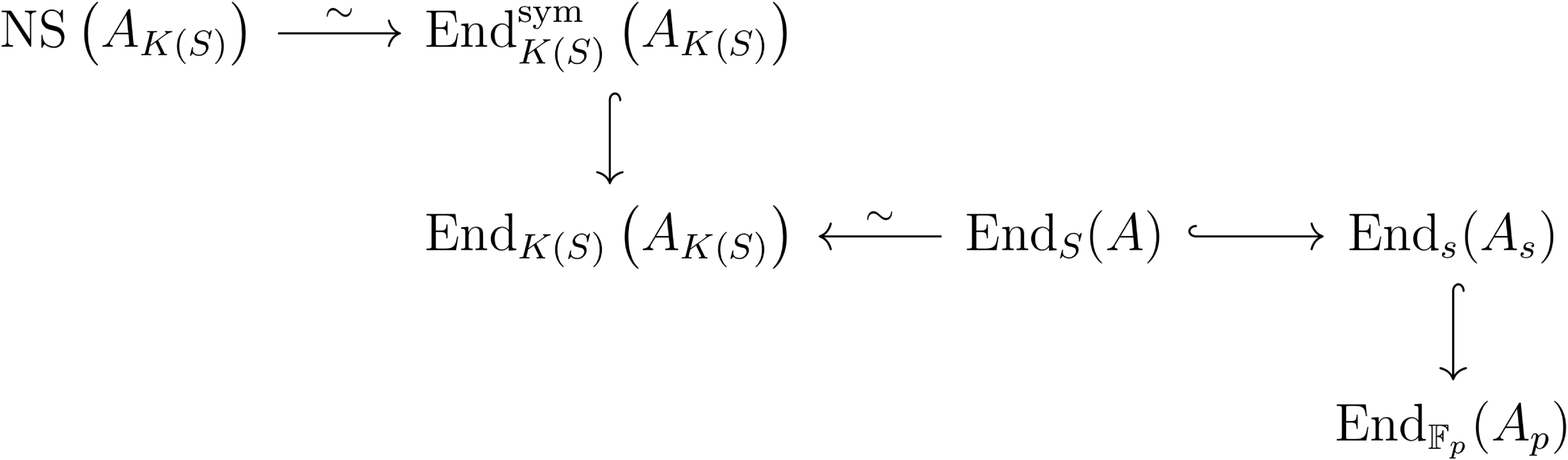}
\end{figure}	


    
The endomorphism of an abelian variety over a finite field $\F_p$ can be
studied using
the Frobenius endomorphism\footnote{For a projective variety over $\F_p$, the Frobenius morphism is the map on points $(x_0:\cdots:x_n)\mapsto(x_0^p:\cdots:x_n^p)$}.
%
For the Jacobian $A=J(C)$ of a curve $C$, the Frobenius endomorphism $\pi_{C_p}$ of the curve $C$ induces the Frobenius endomorphism $\pi_p$ on $A_p$.
Through the correspondence of the \'etale cohomology groups $H^1_{et}(A_p,\Z_l)$ and $H^1_{et}(C_p,\Z_l)$,
we know that the characteristic polynomial  $ P$ of $\pi_{C_p}$ is same as the characteristic polynomial of $\pi_p$ acting on the Tate module $T_l(A)$, where $l$ is a prime different from $p$.

 The characteristic polynomial $ P$ of $\pi_{C_p}$ is computable via the
 zeta function.  The zeta function of the curve $C$ defined over $\F_{p}$
 is given by the formula
 \begin{align*}
 Z(C,s)=\exp\left(\sum_{m=1}^{\infty}\frac{\# C(\F_{p^m})}{m}p^{-ms}\right).
 \end{align*} 
 According to the  Weil conjectures~\cite{MR0029522},
  $Z(C,s)$ is a rational function of $t=p^{-s}$ and
   we can write
   \begin{align*}
   Z(C,s)=\frac{L(t)}{(1-t)(1-pt)}\eqqcolon Z_C(t)
   \end{align*}
for a certain polynomial $L(t)$ of degree $2g$ with integer coefficients. 
 For genus $g=2$ case, the numerator is
 \begin{align*}
 L(t) = p^2t^4-pa_1t^3 + a_2t^2-a_1t+1,
 \end{align*}
 where 
 \begin{align*}
 a_1=p+1-N_1,\ a_2= \frac{1}{2}(N_2 +N_1^2)-(p+1)N_1 +p
 \end{align*}
 and $N_m=\# C(\F_{p^m})$.  In particular, for small $p$, this can be
 efficiently computed by enumerating points on any smooth projective model
 of $C$.  The characteristic polynomial of the Frobenius endomorphism
 $\pi$ of the Jacobian $J(C)$ is then essentially given by the numerator of
 the zeta function, or more precisely
\begin{align*}
P(t)
=t^{4}L\left(\frac{1}{t}\right)
=t^4-a_1t^3+a_2t^2-pa_1t+p^2.
\end{align*}
When the characteristic polynomial $P(t)$ has no multiple roots over $\C$, nonzero discriminant, then Theorem~2 of Tate~\cite{MR0206004} tells us that the rational endomorphism ring $\operatorname{End}_{\F_p}(J(C))\otimes\Q$ is generated by Frobenius,
i.e.
$\operatorname{End}_{\F_p}(A)\otimes \Q\cong \Q[t]/P(t)$.  (More generally, if the $l$-th powers of the roots are distinct, then $\operatorname{End}_{\F_{p^l}(A)}\otimes\Q$ is generated by the $l$-th power of Frobenius, so there are no additional endomorphisms defined over $\F_{p^l}$.)
 
Let us consider the case $H^{\frac{4}{3}+\frac{4}{3}}_{\rm{KFS}}$.
The spectral curve for $H^{\frac{4}{3}+\frac{4}{3}}_{\rm{KFS}}$ is
\begin{align*}
y^2=&
x^6-2 x^5+(2h_1+1)x^4
+2( h_2 -h_1)x^3
+(h_1^2 -2h_2)x^2
   +2 h_1 h_2 x
   \\
   &+h_2^2-4 s.
\end{align*} 

Consider an instance
 $h_1=12$, $h_2=17$, $s=29$ and reduce this curve modulo $p=37$.
\begin{align*}
C_1\colon
y^2=x^6 + 35x^5 + 25x^4 + 10x^3 + 36x^2 + x + 25.
\end{align*}
We can compute using Magma\cite{MR1484478} that $\# C_1\left(\F_{p}\right)=36$, $\# C_1\left(\F_{p^2}\right)=1442$, 
The zeta function of this hyperelliptic curve is
\begin{align*}
Z_{C_1}(t)
=
\frac{37^2t^4 - 37\cdot 2 t^3 + 38t^2 - 2t + 1}{(1-t)(1-37t)}.
\end{align*}
The characteristic polynomial of Frobenius is 
\begin{align*}
P_1(t)=&t^4-2 t^3+38 t^2-37\cdot 2 t+37^2
\\
=&\left(t^2-(1+\sqrt{37})t+37\right)\left(t^2-(1-\sqrt{37})t+37\right).
\end{align*}
The quartic is square-free and is
irreducible over $\Q$.  Therefore,
$E_1\coloneqq\operatorname{End}_{\F_p}(J(C_1))\otimes\Q \simeq \Q(\alpha)=\Q[t]/P_1(t)$ is a degree $2g=4$ extension of $\Q$, where $\alpha$ is one of the roots of $P_1(t)$.
Since $P_1(t)$ is irreducible and the Galois group of $P_1(t)$ contains a transposition $(13)\in S_4$, but does not contain the whole group $S_4$, the Galois group is dihedral group
$D_4=\left<\sigma=(1234),\tau=(13)\right>$.
Each order 2 subgroup of $D_4$ is contained in a unique subgroup of order
4.  Therefore, Galois theory tells us that $\Q(\alpha)$ contains unique
subfield of degree 2 over $\Q$, namely
$\Q\left(\alpha+\frac{p}{\alpha}\right)\simeq \Q(\sqrt{37})$.
Consider the same curve over $\Q$ but now reduce modulo a different prime
$p=53$ to obtain
\begin{align*}
C_2\colon
y^2=x^6 + 51x^5 + 25x^4 + 10x^3 + 4x^2 + 37x + 14.
\end{align*}
We can again compute using Magma that $\# C_2\left(\F_{p}\right)=57$, $\# C_2\left(\F_{p^2}\right)=3001$.
The zeta function of this hyperelliptic curve is
\begin{align*}
Z_{C_2}(t)
=
\frac{53^2t^4 + 53\cdot 3t^3 + 100t^2 + 3t + 1}{(1-t)(1-53t)}.
\end{align*}
The characteristic polynomial of Frobenius is 
\begin{align*}
P_2(t)
=&t^4+3 t^3+100 t^2+53\cdot 3 t+53^2
\\
=&\left(t^2+\frac{3+\sqrt{33}}{2}t+53\right)\left(t^2+\frac{3-\sqrt{33}}{2}t+53\right).
\end{align*}
We again find that $E_2\coloneqq\operatorname{End}_{\F_p}(J(C_2))\otimes
\Q\simeq \Q(\beta)\simeq\Q[t]/P_2(t)$ is a degree $2g=4$ extension of $\Q$,
where $\beta$ is one of the roots of $P_2(t)$.  $\Q(\beta)$ again contains
a unique subfield of degree 2 over $\Q$, namely
$\Q\left(\beta+\frac{p}{\beta}\right)\simeq \Q(\sqrt{33})$.
 
 From the semi-continuity of the reduction map, if
 $\operatorname{End}(A)\otimes \Q$ were strictly larger than $\Q$, it would
 inject in both reductions, and thus the fields $E_1$ and $E_2$ would have
 a common subfield other than $\Q$.  Since the fields are not isomorphic,
 that common subfield would have to be quadratic, and we have already
 determined that their respective unique quadratic subfields are not
 isomorphic, which is what we needed to prove.
 
\begin{rem}
  This only proves that the endomorphism ring over $k(S)$ is trivial, but in fact the same argument easily shows that the endomorphism ring remains trivial over $\overline{k(S)}$, and thus in particular, any {\em geometric} genus 2 component of the Painlev\'e divisor is isomorphic to the spectral curve.  The only thing that needs to be checked is that no two roots of the respective characteristic polynomials of Frobenius have ratio a root of unity, since then that implies that the {\em geometric} endomorphism ring is contained in the relevant field.
\end{rem}
 \begin{rem}
 The approach  we have used here is similar to the argument  of section ``The structure of $\End(A)$" in Appendix A of \cite{MR2058652}.
 \end{rem}
 
 For the $H^{\frac{3}{2}+\frac{5}{4}}_{\rm{KSs}}$, a similar argument with $h_1=12$, $h_2=17$, $s=29$ and $p=37$ and $p=31$ proves the result we wanted.
 \begin{rem}
 These proofs using reduction modulo $p$ are also applicable for the other
 cases we have proved using the algebraic independence of the Igusa
 invariants.  This approach is far more likely to work in higher genus,
 both because the analogues of Igusa invariants are much more complicated
 (when known!), and because the dimension of the moduli space of Jacobians
 grows much faster with the genus than one expects the dimensions of
 families of spectral curves to grow.
 \end{rem}
 
 \subsubsection*{Proof for all the other cases}
 All the other 34 cases degenerate to one of the 6 cases.  Let us recall
 what it means for an equation $P_A$ to degenerate to an equation $P_B$.
Let $R$ be a dvr with maximal ideal $\mathfrak{m}$ and residue field
$k=R/\mathfrak{m}$, and consider a curve $\mathcal{C}/\Spec
R[h_1,h_2,\frac{1}{\Delta}]$.  Let $I$ be the ideal defined by
\begin{align*}
0\to I\to R\left[h_1,h_2,\frac{1}{\Delta}\right]\to
 k\left[h_1,h_2,\frac{1}{\Delta}\right]\to 0.
\end{align*}
Then, $I$ is a prime ideal of codimension 1 and the localization $R_{I}$ is
a dvr.  If $P_A$ is the base change of $C$ to the field of fractions of $R$
and $P_B$ is the base change to $k$, then we say that $P_A$ degenerates to
$P_B$.

In such a case, the generic spectral curve of $P_B$ is the curve over
$R_{I}/\mathfrak{m}_I \simeq k(h_1,h_2)$, which is the special fiber of the
curve over $R_I$, the generic fiber of which is the generic spectral curve
of $P_A$.  Then Lemma~\ref{lem:end1} and Lemma~\ref{lem:end2} guarantee
that the endomorphism ring for $P_A$ injects in the endomorphism ring for
$P_B$.  Thus, if the endomorphism ring for $P_B$ is trivial, so is
$\End(P_A)$.  Since the endomorphism rings for the 6 most degenerate cases
are trivial, so are the generic instances of the other 34 cases.

This ends a proof for all 40 cases.
\qed

\appendix
\section{Spectral curves}
\subsection{The Garnier system of type $\frac{9}{2}$}
The Riemann scheme for this system, expressing the singular point, its ramification index and  diagonal elements of the Hukuhara-Turrittin-Levelt canonical form~\cite{zbMATH03026099, zbMATH03109720, zbMATH03477550} as in \cite{MR3740334} is given by
\[
\left(
\begin{array}{c}
 x=\infty \left(\frac{1}{2}\right)\\
\overbrace{\begin{array}{ccccccccc}
   1 &  0 & 0  & 0 & \frac{3}{2}s_1 & 0 & -\frac{s_2}{2} & 0\\
   -1 &  0 & 0 & 0 & -\frac{3}{2}s_1 & 0 & \frac{s_2}{2}  & 0
        	\end{array}}
\end{array}
\right) .
\]
	The spectral curve
	\begin{align*}
	\det\left(y I_2-A(x)\right)=0
	\end{align*}
	 of the autonomous Garnier system of type $\frac{9}{2}$ is expressed as
\begin{align*}
y^2=x^5+3s_2x^3-s_1x^2+(2s_2^2-h_1)x+h_2-s_1s_2,
\end{align*}
where $h_1$ and $h_2$ are the Hamiltonians and $s_1$, $s_2$ are constants.
The Igusa invariants are
\begin{align*}
J_2=&80 h_1+12 s_1^2,
\\
J_4=&-800 h_2 s_2+480 h_1^2-16 h_1 s_1^2+32 s_2^2 s_1+6 s_1^4,
\\
J_6=&-16000 h_2^2 s_1+6400 h_2 h_1 s_2+320 h_2 s_2 s_1^2-1280
   h_1^3+704 h_1^2 s_1^2
   \\
   &
   -896 h_1 s_2^2 s_1
   -112 h_1 s_1^4
  +256
   s_2^4+64 s_2^2 s_1^3+4 s_1^6,
   \\
  J_8=&
  \frac{1}{4}(J_2J_6-J_4^2),
  \\
  J_{10}=\Delta,
\end{align*}
where $\Delta$ is the discriminant of the quintic.
The absolute invariants are as follows.
\begin{align*}
I_1=\frac{J_4}{J_2^2},\
I_2=\frac{J_6}{J_2^3},\
I_3=\frac{J_{10}}{J_2^5}.\
\end{align*}
Since the matrix
\begin{align*}
\begin{pmatrix}
\frac{\partial I_i}{\partial h_1}
 & 
 \frac{\partial I_i}{\partial h_2}
  & 
  \frac{\partial I_i}{\partial s_1}
   & 
   \frac{\partial I_i}{\partial s_2}
\end{pmatrix}_{i=1,2,3}
\end{align*}
has rank 3, from the Jacobian criterion, $I_1, I_2, I_3$ are algebraically independent.
\begin{rem}
  The generic fiber of the corresponding family of quintics has Galois
  group $S_5$, which also implies triviality of the endomorphism ring, by a
  result of Zarhin~\cite{MR1748293}.
\end{rem}

\subsection{The Garnier system of type $\frac{5}{2}+\frac{3}{2}$}
The Riemann scheme for this system is \cite{MR3740334}
\[
\left(
\begin{array}{cc}
  x=0 \left(\frac{1}{2}\right) & x=\infty  \left(\frac{1}{2}\right)\\
\overbrace{\begin{array}{cc}
     \sqrt{s_2} & 0 \\
     -\sqrt{s_2} & 0 
           \end{array}}
& 
\overbrace{\begin{array}{cccc}
     1 & 0 & -s_1/2 & 0 \\
     -1 & 0 &  s_1/2 & 0 
           \end{array}} 
\end{array}
\right).
\]
The spectral curve is
\begin{align*}
y^2
=
x^5
-s_1 x^4
+h_1 x^3+
h_2 x^2
+s_2 x.
\end{align*}
We again verify that the Jacobian matrix
\begin{align*}
\begin{pmatrix}
\frac{\partial I_i}{\partial h_1}
 & 
 \frac{\partial I_i}{\partial h_2}
  & 
  \frac{\partial I_i}{\partial s_1}
   & 
   \frac{\partial I_i}{\partial s_2}
\end{pmatrix}_{i=1,2,3}
\end{align*}
has rank 3, so the Igusa invariants $I_1, I_2, I_3$ are algebraically
independent.  (This can also be seen by inspection: any genus 2 curve can
be put into the above form by choosing a pair of Weierstrass points.)

\subsection{The first Matrix Painlev\'e system $H_{\mathrm{Mat}}^{\mathrm{I}}$}
The Riemann scheme is given by
\[
\left(
\begin{array}{c}
 x=\infty \left(\frac{1}{2}\right)\\
\overbrace{\begin{array}{cccccc}
   1 &  0 & 0  & 0 & s/2 & \theta^\infty_2/2\\
   1 &  0 & 0 & 0 & s/2 & \theta^\infty_3/2\\
   -1 & 0 & 0& 0 & s/2& \theta^\infty_2/2\\
   -1 & 0 & 0 & 0 & s/2 & \theta^\infty_3/2
        	\end{array}}
\end{array}
\right) ,
\]
and the Fuchs-Hukuhara relation is expressed as $\theta^{\infty}_2+\theta^{\infty}_3=0.$
\begin{align}
y^4
&
- \left(2x^3+2 s x+h_1\right)y^2
+x^6
+2 s x^4
+h_1x^3
+s^2 x^2
\\
&
+\left(h_1 s-(\theta _2^{\infty})^2\right)x
   +h_2=0.
   \nonumber
\end{align}
The spectral curve has the following Weierstrass form.
\begin{align*}
y^2
=
&
x^6
-3 h_1 x^5
+3( h_1^2 +h_2)x^4
-h_1^3x^3
-6 h_1 h_2 x^3
+3 (h_1^2 +h_2)h_2 x^2
\\
&
+\left(\theta_2^{\infty}\right)^4 s x^2
-h_1 \left(\theta _2^{\infty}\right)^4 s x
-\left(\theta _2^{\infty}\right)^6 x
-3 h_1 h_2^2 x
+h_2^3
+h_2 \left(\theta _2^{\infty}\right)^4 s.
\end{align*}
We find as above that the absolute invariants $I_1, I_2, I_3$ are
algebraically independent.

\subsection{The third Matrix Painlev\'e system $H_{\mathrm{Mat}}^{\mathrm{III}(D_8)}$}
The Riemann scheme is given by
\[
\left(
\begin{array}{cc}
  x=0 \left(\frac{1}{2}\right)& x=\infty \left(\frac{1}{2}\right)\\
\overbrace{\begin{array}{cc}
     \sqrt{t} & 0 \\
     \sqrt{t} & 0 \\
     -\sqrt{t} & 0 \\
     -\sqrt{t} & 0
           \end{array}}
& 
\overbrace{\begin{array}{cc}
     1 & \theta^\infty_2/2 \\
     1 & \theta^\infty_3/2 \\
     -1 & \theta^\infty_2/2 \\
     -1 & \theta^\infty_3/2
           \end{array}} 
\end{array}
\right) ,
\]
and the Fuchs-Hukuhara relation is written as
$\theta_2^\infty +\theta_3^\infty =0$.
\begin{align*}
&y^4
+2 x y^3
-\left(-2 x^3+(h_1+(\theta_2^{\infty})^2 )x^2+2 s x\right)y(x+y) 
+x^2y^2
\\
&
+x^6
+h_1 x^5
+h_2 x^4
h_1 s x^3
+(\theta_2^{\infty})^2 s x^3
+s^2 x^2
=0.
\end{align*}
We omit writing the explicit Weierstrass form of the equation, but {\tt
  Maple} allows us to compute it, and thus the Igusa invariants, so that we
can again show that the absolute invariants are algebraically independent.


\def\cydot{\leavevmode\raise.4ex\hbox{.}}

\end{document}